\newcommand{\CC}{\mbox{${\mathbb C}$}}
\newcommand{\fol}{\mbox{${\mathscr F}$}}
\newcommand{\D}{\mbox{${\mathscr D}$}}
\newcommand{\NNN}{{\mathcal{N}}}
\newcommand{\OO}{{\mathcal{O}}}
\newcommand{\PP}{\mathbb{P}}
\newtheorem{lema}{Lemma}[section]
\newtheorem{cor}[lema]{Corollary}
\newtheorem{obs}[lema]{Remark}
\newtheorem{teo}[lema]{Theorem}
\newtheorem{prop}[lema]{Proposition}
\theoremstyle{definition}
\newtheorem{defi}[lema]{Definition}
\newtheorem{exe}[lema]{Example}
\begin{document}

\title{GSV-index for  holomorphic Pfaff Systems}
\hyphenation{ho-mo-lo-gi-cal}
\hyphenation{fo-lia-tion}


\begin{abstract}
In this work we introduce  a  GSV  type index for varieties  invariant by   holomorphic Pfaff systems (possibly non locally
decomposables)  on  projective  manifolds. 
 We prove a  non-negativity property   for    the index. As an application, we prove that the non-negativity of the GSV-index gives us an obstruction to the solution of the Poincar\'e problem for Pfaff systems on projectives spaces.
\end{abstract}

\author{Maur\'icio Corr\^ea}
\address{Maur\'icio Corr\^ea \\ ICEx - UFMG \\
Departamento de Matem\'atica \\
Av. Ant\^onio Carlos 6627 \\
30123-970 Belo Horizonte MG, Brazil } \email{mauriciojr@ufmg.br}
\author{Diogo da Silva Machado}
\address{ Diogo da Silva Machado \\ DMA - UFV , Av P.H. Rolfs, s/n, Campus Universit\'ario, Vicosa MG, Brazil, CEP 36570-900}
\email{diogo.machado@ufv.com.br}
\thanks{ }
\subjclass{Primary 32S65, 37F75; secondary 14F05}
\keywords{Pfaff Systems,   Residues, Invariant varieties}

\maketitle

\section{Introduction}

The GSV-index for vector fields tangent to hypersurfaces with isolated singularities  was introduced by X. G\'omez-Mont, J. Seade and A. Verjovsky \cite{GSV}  generalizing   the Poincar\'e-Hopf index. 
The concept of GSV-index for vector fields tangent to complete intersections was  extended   by J. Seade and T. Suwa in  \cite{SeaSuw1, SeaSuw2} and   J. -P. Brasselet, J. Seade and T. Suwa in  \cite{a03}.

 D. Lehmann, M. G. Soares and T. Suwa  introduced in \cite{a07} the {\it virtual index}  for vector fields on complex analytic varieties  via Chern-Weil theory.
They showed that  this index  coincides with the GSV-index if the variety is a local complete intersection variety with isolated singularities. 
  X. Gomez-Mont in  \cite{a08} defined the {\it homological index}  for holomorphic vector fields on arbitrary varieties with an isolated
normal singularity, and it coincides with the GSV-index when the variety in
question is complete intersection. Recently, 
T. Suwa in \cite{Suw3}  gave a new interpretation of the  GSV-index as  a  residue arising from a certain localization of the Chern class of the ambient tangent
bundle.

M. Brunella in \cite{Bru}  introduced the GSV-index for 1-dimensional singular foliations in complex surfaces in terms of the germs of  $1$-forms inducing the foliation and   established a relation  between  the GSV-index  with the 
{\it Khanedani-Suwa variational  index}  \cite{KS} and {\it Camacho-Sad index}  \cite{CS}. Moreover,   he showed that the non-negativity of the GSV-index  is the obstruction  to the solution of the Poincar\'e problem in complex compact surfaces. We recall that  the   {\it Poincar\'e problem}   is a question proposed by H. Poincar\'e in \cite{Poin}    of bounding
the degree of algebraic solutions of an algebraic differential equation on the complex
plane. Many authors have been working on the  Poincar\'e problem  and   its generalizations for  Pfaff systems, 
see for instance the papers by D.  Cerveau and A.  Lins Neto  \cite{Cerv1},     M. G. Soares \cite{So1},  M. Brunella
and L. G. Mendes \cite{BruMe}, E. Esteves and S. L. Kleiman \cite{EK}, V. Cavalier and D. Lehmann \cite{CL}, E. Esteves and J. D. A.  Cruz \cite{EstCruz},    M. Corr\^ea and M. Jardim \cite{CJ}, and M. Corr\^ea and M. G. Soares \cite{CS1,CS2}.

In this work, our main goal is to  introduce  a  GSV  type index for Pfaff systems on projective  manifolds and demonstrate some of its important  properties. More precisely, we prove the following Theorem.

\begin{teo}\label{prop9}
Let $X$ be a projective manifold and  $V\subset X$ a reduced local complete intersection subvariety  of codimension $k$ invariant by  a Pfaff system, of rank $k$, induced by a twisted form $\omega\in \mathrm H^0(X,\Omega_X^k\otimes \NNN)$.  Then the following hold:
\begin{itemize}
\item[(a)] 
there exists a  complex number $\mathrm{GSV}(\omega, V, S_i)$ which  depends only on the local   representatives of  $\omega, V$ and  $S_i$;
\\ 
\item[(b)]  
if     $\mathrm{Sing}(\omega,V): =\mathrm{Sing}(\omega)\cap V$ has codimension one in $V$, then  the following formula holds 
$$
\sum_i \mathrm{GSV}(\omega, V, S_i) [S_i] = c_1([\NNN \otimes \det (N_{V/X})^{-1}])|_V\frown [V],  
$$
where $S_i$ denotes an irreducible component of $\mathrm{Sing}(\omega,V)$ and  $N_{V/X}$ is the normal sheaf of the subvariety  $V$.
\end{itemize}
\end{teo}

The next result says us how   to calculate the GSV-index which  can be compared with  Suwa's  formula in \cite[Proposition 5.1 ]{Suw3}.

\begin{teo}\label{teo404} Setting as in Theorem \ref{prop9}.
Let  $x\in V$, let 
$\{ f_{ 1}=\cdots= f_{ k} = 0\}$
be a local equation of $V$ in a neighborhood $U $ of $x$ and  consider  
$$
\omega_{|U} = \sum_{\mid I\mid = k}a_I dZ_I, \,\,\,\,\,\mbox{$a_I\in \OO(U )$}.
$$
the  holomorphic $k$-form inducing the  Pfaff system  $\omega\in \mathrm H^0(X,\Omega_X^k\otimes \NNN)$ on $U $. 
Then the following formula holds 
\begin{eqnarray} \label{formula-gsv}
\mathrm{GSV}(\omega,V,S_i) = \mathrm{ord}_{S_i}(a_{I\,}|_V) - \mathrm{ord}_{S_i}(\Delta_{I\,}|_V).
\end{eqnarray}
\noindent where $\Delta_I$ is the $k\times k$ minor of the Jacobian matrix
  $ \mathrm{Jac}(f_{ 1},\ldots,f_{ k})$, corresponding to the multi-index $I$.
\end{teo}

The formula (\ref{formula-gsv})  allows us  to prove the following   non-negativity properties    for    the index:
\begin{itemize}
\item[(i)]  If $S_i\cap \mathrm{Sing}(V)= \emptyset$, then $\mathrm{GSV}(\omega,V,S_i)\geq 0$. 

\item[(ii)] If $V$ is  smooth, then $\mathrm{GSV}(\omega,V,S_i)>0$. 
\end{itemize}
See corollaries \ref{4p4} and  \ref{4p10} in the section \ref{section-coro}.

As an application, we prove,  in the section \ref{sec23},  that the non-negativity of the GSV-index gives an  obstruction to the solution of the Poincar\'e Problem for Pfaff systems on projective complex space. More precisely, 
let $\omega \in \mathrm H^0(\PP^n,\Omega_{\PP^n}^k(d+k+1))$ be a holomorphic  Pfaff system of rank $k$ and degree $ d$. Let $V\subset \PP^n$  be a reduced   complete intersection variety, of codimension $k$ and multidegree $(d_1,\dots,d_{k})$ ,  invariant  by $\omega$. Suppose that  $\mathrm{Sing}(\omega,V)$ has codimension one in $V$,  then
\begin{eqnarray} \nonumber 
\sum_i \mathrm{GSV}(\omega,V,S_i)\deg(S_i) \,\,\,=\,\,\, [d+k+1 - (d_1+\cdots+d_k)]\,\cdot (d_1\cdots d_k),
\end{eqnarray}
where $S_i$ denotes an irreducible component of $\mathrm{Sing}(\omega,V)$.
Therefore,   if $\mathrm{GSV}(\omega,V,S_i)\geq 0$, for all $i$, we have 
$$
d_1+ \cdots+ d_k \leq  d+k+1.
$$

\subsection*{Acknowledgments}
We are grateful to   Tatsuo Suwa,  Marcio Gomes  Soares and Renato Vidal Martins    for interesting conversations.
The authors also thank the anonymous referees for giving many suggestions that helped improving the
presentation of the paper.
This work was partially supported by CNPq, CAPES, FAPEMIG and FAPESP-2015/20841-5.
We are grateful to Imecc--Unicamp for its  hospitality.


\section{Preliminaries}

\subsection{Holomorphic Pfaff Systems} 
Given a complex manifold $X$ of dimension $n$, we denote by $\Omega_X^p$ the
sheaf of germs of holomorphic $p$-forms on $X$.

\begin{defi}
Let $X$ be an $n$-dimensional complex manifold. A  holomorphic  Pfaff system of rank $p$ ($1\leq p \leq n$) on $X$ is a non-trivial section $ \omega \in \mathrm H^0(X,\Omega_X^p\otimes   \NNN)$,  where 
$\NNN$ is a  holomorphic line bundle on $X$.    The singular set of $\omega$ is defined by   $\mathrm{Sing}(\omega)=\{z\in X; \ \omega(z)=0\}$.
\end{defi}

\noindent Given a Pfaff system $\omega$ of rank $p$ on $X$, then $\omega$ is determined by the following: 
\begin{itemize}
\item [(i)] an open covering $\{U_{\alpha}\}_{\alpha\in \Lambda}$  of $X$;
\item [(ii)] holomorphic $p$-forms $\omega_{\alpha} \in \Omega^p_{U_{\alpha}}$ satisfying
\end{itemize}
\begin{eqnarray}\nonumber
\omega_{\alpha } = (h_{\alpha \beta}) \omega_{\beta}\,\,\,\,\,\,\,\,\,\,\,\,\,\,\,\,\,\,\mbox{on}\,\,\,\,\,\,\,\,\,\,\,\,\,\,\, U_{\alpha}\cap U_{\beta}\neq \emptyset,
\end{eqnarray}
\noindent where $h_{\alpha \beta}\in \OO(U_{\alpha}\cap U_{\beta})^{\ast}$ determines the cocycle representing $\NNN$. For more details on Pfaff systems see \cite{CJV, CMM,EK}.

\begin{defi}
We say that an analytic subvariety $V \subset X$ is  {\it invariant} by a Pfaff system $\omega$ if 
$
i^*\omega \equiv 0,
$
where $i: V\hookrightarrow X$ is the inclusion map.
\end{defi}
Let $\omega$ be a Pfaff system of rank $p$  on $X$  and $V$ an analytic subvariety of  $X$ of pure  codimension $k$ . Suppose that for each $\alpha\in \Lambda$ we have 
$$
V\cap U_{\alpha} =\{z\in U_{\alpha}: f_{\alpha,1}(z)=\cdots= f_{\alpha,k}(z) = 0\},
$$
where $f_{\alpha,1},\ldots, f_{\alpha,k}\in \OO(U_{\alpha})$. If $V$ is invariant by $\omega$, then for each $i \in \{1,\ldots,k\}$  there exist holomorphic  $(p+1)$-forms $\theta^{\alpha}_{i1},\ldots, \theta^{\alpha}_{ik}\in \Omega^{p+1}_{U_{\alpha}}$, such that
\begin{eqnarray}\label{3p2}
\omega_{\alpha}\wedge df_{\alpha,i} = f_{\alpha,1}\theta^{\alpha}_{i1} + \cdots + f_{\alpha,k}\theta^{\alpha}_{ik}.
\end{eqnarray}

\subsection{Pfaff Systems on $\PP^n$}

 Let $\omega \in \mathrm H^0(\PP^n,\Omega_{\PP^n}^k(r))$ be a holomorphic  Pfaff system of rank $k$ on  $\PP^n$. 
Now take a    generic non-invariant  linearly embedded subspace $i:H\simeq \PP^k \hookrightarrow \PP^n$.  We have an induced  non-trivial section  
$i^*\omega \in \mathrm H^0(H ,\Omega_{H}^k(r)) \simeq \mathrm H^0( \PP^k , \OO_{\PP^k}(-k-1+r)),$ 
since $\Omega_{\PP^k}^k=\OO_{\PP^k}(-k-1)$ .  The tangency set between $\omega$ and $H$, denoted by $Z(i^*\omega)$, is defined as the hypersurface of zeros of $i^*\omega$ on $H$.  The {\it degree} of $\omega$, denoted by $\deg(\omega)$, is defined as the degree of $Z(i^*\omega)$ in $H$ and, therefore, is given by $$\deg(\omega)=-k-1+r.$$

In particular, in this case, $\omega \in \mathrm H^0(\PP^n,\Omega_{\PP^n}^k(d+k+1))$, where $\deg(\omega) = d$. A Pfaff system of degree $d$ can be induced  by  a polynomial $k$-form on $\CC^{n+1}$ with homogeneous coefficients of degree $d+1$, see for instance \cite{CMM,CMM2}.

\subsection{GSV-index on Surfaces}  

In this section we present  Brunella's definition of the GSV-index for one-dimensional  holomorphic  foliations on surfaces, see  \cite{Bru}. 

Let $X$ be a complex compact  surface and  $\fol$ a  one-dimensional holomorphic foliation on $X$. Let $C$ be  a reduced curve on $X$. 
Consider   $ \omega \in \mathrm H^0(X,\Omega_X^1\otimes   \NNN)$ a  rank one  Pfaff system inducing $\fol$. 
If  $C$ is invariant by $\fol$ we say that  $\fol$ is   {\it logarithmic along} $C$. 

Given a point $x\in C$, let $f=0$  be a local equation of $C$ in a neighborhood $U_{\alpha}$ of $x$ and let $\omega_{\alpha}$ be the holomorphic $1$-form inducing the foliation   $\fol$ on $U_{\alpha}$. Since $\fol$ is logarithmic along $C$,  it follows from   \cite{Sai, LN, Suw} that there are holomorphic functions $g$ and $\xi$ defined in a neighborhood of $x$,  
that do not vanish identically both of them
simultaneously on $C$, such that
\begin{eqnarray} \label{2p11}
g\,\frac{\omega_{\alpha}}{f} = \xi\, \frac{df}{f} + \eta,
\end{eqnarray}
\noindent with $\eta$ being a suitable holomorphic $1$-form.
M. Brunella in  \cite{Bru} showed that   the GSV-index can be defined as follows:

\begin{defi}[Brunella \cite{Bru}] Let $\fol$ be a one-dimensional holomorphic foliation on a complex compact surface $X$ and logarithmic along a reduced curve $C \subset X$. Given $x\in C$, we define
$$
\mathrm{GSV}(\fol, C, x) = \sum_i \mathrm{ord}_x \left(\displaystyle\frac{\xi}{g}|_{C_i}\right),
$$
 where $C_i\subset C$ are irreducible components of $C$ and  $\mathrm{ord}_x \left(\displaystyle\frac{\xi}{g}|_{C_i}\right)$ denotes the  order of vanishing of $\displaystyle\frac{\xi}{g}|_{C_i}$ at  $x$.   \end{defi}

\begin{teo} [ Brunella \cite{Bru}] Let $\fol$ be a  one-dimensional holomorphic foliation on a complex compact surface $X$ and logarithmic along a reduced curve $C \subset X$. Then
\begin{eqnarray}\nonumber
\sum_{x\in Sing\left(\fol\right) \cap C} \mathrm{GSV}(\fol,C,x) =  \NNN  \cdot  C - C \cdot C\,.
\end{eqnarray}

\end{teo}

\subsection{Decomposition of meromorphic forms}

A. G. Aleksandrov in  \cite{Ale}  introduced  the concept of {\it multiple residues} of a logarithmic differential form with poles along a complete intersection which is a   generalization of   Saito's  residues  \cite{Sai}. 

In this section, we  make a brief presentation  about a   decomposition of meromorphic forms with poles  along complete intersections which  is  used in the definition of the  GSV-index for Pfaff systems.

Let $U$ be a germ of $n$-dimensional complex manifold. Let $D$ be an analytic reduced hypersurface  on $U$ and 
consider its decomposition
into  irreducible components  
$$
D = D_1\cup\cdots \cup D_k,
$$
and suppose that   the analytic subvariety  $V = D_1\cap\cdots \cap D_k$ has pure codimension $k$. We assume  that  
$$
V  = \{z\in U : f_{ 1}(z) =\cdots =f_{ k}(z) = 0\},
$$
 with $f_{1},\ldots,f_{  k}\in \OO(U )$ and for each $i \in \{1,\ldots, k \}$,
$$
D_i   = \{z\in U : f_{i}(z) = 0\}.
$$
\noindent Since $V$ is  a  reduced variety, then the   $k$-form $df_{1}\wedge \ldots \wedge df_{ k}$ is not identically zero on each irreducible component of $V$.

We denote by $\Omega^q_U(\hat{D}_i)$, $q\geq 1$, the $\OO_U$-module of meromorphic differential $q$-forms  with simple poles on the $\hat{D}_i = D_1\cup \cdots \cup  D_{i-1}\cup D_{i+1} \cup \cdots\cup D_k$, for each $i = 1,2,\ldots,k$.

\begin{teo}[Aleksandrov \cite{Ale}] \label{teo_alec}
Let $\omega  \in \Omega^q_U (D)$ be a meromorphic $q$-form with simple poles on $D$. If for each $j=1,\ldots, k$,
\begin{eqnarray}\nonumber
df_{ j}\wedge \omega  \in \displaystyle \sum_{i=1}^k\Omega^{q+1}_{U }(\hat{D}_i) 
\end{eqnarray}
\noindent then, there exist a holomorphic function $g $,   which is not identically zero on every irreducible component of $V$, a holomorphic $(q-k)$-form $\xi \in \Omega_{U }^{q-k}$ and a meromorphic $q$-form $\eta \in \sum^k_{i=1} \Omega^{q}_{U } (\hat{D}_i)$  
such that the following decomposition holds
\begin{eqnarray}\label{expr110}
g \omega  = \frac{df_{ 1}}{f_{ 1}}\wedge\cdots \wedge \frac{df_{ k}}{f_{ k}} \wedge  \xi  + \eta .
\end{eqnarray}
\end{teo}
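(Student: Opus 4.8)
The plan is to prove the decomposition by induction on the number $k$ of irreducible components of $\ddd$, the engine being the classical de Rham--Saito division lemma for a single reduced hypersurface \cite{Sai}. Two reformulations are useful. First, the hypothesis says exactly that $\omega$ has a simple pole along $\ddd$ and that each $df_j\wedge\omega$ has no pole along $\ddd_j$; equivalently the holomorphic $q$-form $\Phi:=(f_1\cdots f_k)\,\omega$ satisfies $df_j\wedge\Phi\in f_j\,\Omega^{q+1}_U$ for $j=1,\dots,k$, and after multiplying \eqref{expr110} by $f_1\cdots f_k$ the conclusion reads $g\,\Phi=df_1\wedge\cdots\wedge df_k\wedge\xi+\sum_{i=1}^{k}f_i\,\eta_i$ with $\xi,\eta_i$ holomorphic. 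Second, since $V$ has pure codimension $k$, the functions $f_1,\dots,f_k$ form a regular sequence in the regular local ring $\OO_U$, and as $V$ is reduced, $(f_1,\dots,f_k)$ is its radical ideal and $df_1\wedge\cdots\wedge df_k$ is generically nonzero on every component of $V$; these facts propagate to any proper subsequence, so that $\ddd_2\cap\cdots\cap\ddd_k$ is again a reduced complete intersection of codimension $k-1$ on which $df_2\wedge\cdots\wedge df_k$ is generically nonzero --- this keeps the inductive hypothesis applicable. (Throughout, ``$g$ regular on a subvariety'' means $g$ vanishes identically on none of its components.)

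For the base case $k=1$ one has $\sum_{i=1}^{1}\Omega^{q+1}_U(\hat{\ddd}_i)=\Omega^{q+1}_U$, so the hypothesis is that $\omega$ has a simple pole along the reduced hypersurface $\ddd_1=\{f_1=0\}$ and $df_1\wedge\omega$ is holomorphic; a short computation with $\Phi=f_1\omega$ shows this is exactly the condition that $\omega$ be a logarithmic $q$-form along $\ddd_1$. The de Rham--Saito division lemma \cite{Sai} then produces $g\in\OO_U$ regular on $\ddd_1$, a holomorphic $(q-1)$-form $\xi$ and a holomorphic $q$-form $\eta$ with $g\,\omega=\tfrac{df_1}{f_1}\wedge\xi+\eta$, which is \eqref{expr110} for $k=1$.

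For the inductive step, assume the statement for $k-1$ components and let $\omega$ be as above. The form $\omega_0:=(f_2\cdots f_k)\,\omega$ has a simple pole along $\ddd_1$ only, and $df_1\wedge\omega_0=(f_2\cdots f_k)(df_1\wedge\omega)$ is holomorphic because $df_1\wedge\omega$ has no pole along $\ddd_1$ and only simple poles along $\ddd_2,\dots,\ddd_k$. The base case therefore provides $g_1$ regular on $\ddd_1$ with
$$
g_1\,\omega \;=\; \frac{df_1}{f_1}\wedge\omega' + \omega'',\qquad \omega':=\frac{\xi_0}{f_2\cdots f_k},\quad \omega'':=\frac{\eta_0}{f_2\cdots f_k},
$$
where $\xi_0,\eta_0$ are holomorphic, so $\omega'\in\Omega^{q-1}_U(\hat{\ddd}_1)$ and $\omega''\in\Omega^q_U(\hat{\ddd}_1)$ have simple poles along $\ddd_2\cup\cdots\cup\ddd_k$ and none along $\ddd_1$. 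The heart of the step is to show that $\omega'$ --- after modification by a form in $\sum_{i\ge2}\Omega^{q-1}_U(\hat{\ddd}_i)$ and multiplication by a holomorphic function regular on $\ddd_2\cap\cdots\cap\ddd_k$ --- satisfies the hypotheses of the theorem for the sub-configuration $\ddd_2,\dots,\ddd_k$. One analyzes this by wedging the last identity with $df_j$ ($2\le j\le k$) and using $df_1\wedge df_1=0$ to get
$$
\frac{df_1}{f_1}\wedge df_j\wedge\omega' \;=\; df_j\wedge\omega'' \;-\; g_1\,(df_j\wedge\omega),
$$
then comparing the poles along $\ddd_j$ and along $\ddd_1\cap\ddd_j$: the right-hand side has no pole along $\ddd_1$ while the left has, which forces the $\ddd_j$-residue of $df_j\wedge\omega'$ to be divisible by $f_1$ along $\ddd_1\cap\ddd_j$, and a further de Rham--Saito division along $\ddd_j$ (legitimate because $df_1$ is generically nonzero on $\ddd_1\cap\ddd_j\supseteq V$, by reducedness) converts this, after the indicated normalization, into the assertion that $df_j\wedge\omega'$ has no pole along $\ddd_j$. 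Granting this, the inductive hypothesis yields $\tilde g$ regular on $\ddd_2\cap\cdots\cap\ddd_k$, a holomorphic $(q-k)$-form $\xi$ and $\tilde\eta\in\sum_{i\ge2}\Omega^{q-1}_U(\hat{\ddd}_i)$ with $\tilde g\,\omega'=\tfrac{df_2}{f_2}\wedge\cdots\wedge\tfrac{df_k}{f_k}\wedge\xi+\tilde\eta$; substituting back gives
$$
(\tilde g\, g_1)\,\omega \;=\; \frac{df_1}{f_1}\wedge\cdots\wedge\frac{df_k}{f_k}\wedge\xi \;+\; \Bigl(\tilde g\,\omega'' + \frac{df_1}{f_1}\wedge\tilde\eta\Bigr),
$$
and the bracketed $q$-form lies in $\sum_{i=1}^{k}\Omega^q_U(\hat{\ddd}_i)$. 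Taking $g$ to be $\tilde g\, g_1$ times the normalizing factor --- a product of functions each regular on a subvariety containing $V$, hence regular on $V$ --- closes the induction.

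The main obstacle is precisely the verification inside the inductive step that the partial remainder $\omega'$ inherits the pole/divisibility hypotheses for $f_2,\dots,f_k$: the interaction between the pole of $\omega'$ along $\ddd_j$ and the logarithmic factor $\tfrac{df_1}{f_1}$ forces a secondary division on the codimension-$2$ set $\ddd_1\cap\ddd_j$ and a renormalization by a holomorphic multiplier, and one must check that this multiplier remains regular on $V$ --- which is exactly where the reducedness of $V$ (equivalently, the generic non-vanishing of $df_1\wedge\cdots\wedge df_k$ on $V$, together with the regular-sequence property) is indispensable; the bookkeeping here is the delicate part. An alternative, more geometric route makes the local content transparent: over the Zariski-dense open set of $V$ complementary to $W:=\mathrm{Sing}(V)\cup\{df_1\wedge\cdots\wedge df_k=0\}$ the divisor $\ddd$ is normal crossings and $z_i:=f_i$ are part of a coordinate system, so the standard logarithmic normal form already gives $\omega=\tfrac{dz_1}{z_1}\wedge\cdots\wedge\tfrac{dz_k}{z_k}\wedge\xi_0+\eta_0$ there with $\xi_0$ holomorphic and $\eta_0\in\sum_i\Omega^q_U(\hat{\ddd}_i)$, and $\xi_0|_V$ is the multiple residue of $\omega$; all the difficulty then lies in extending this multiple residue holomorphically across $W$ after multiplication by a function vanishing on $W$ but regular on $V$ (its pole order along $W$ being finite by coherence) and lifting the extension to a holomorphic $\xi$ on $U$, with $\eta:=g\omega-\tfrac{df_1}{f_1}\wedge\cdots\wedge\tfrac{df_k}{f_k}\wedge\xi$ then shown to lie in $\sum_i\Omega^q_U(\hat{\ddd}_i)$ by a torsion-freeness argument on the Cohen--Macaulay scheme $V$ --- again the hypothesis on the $df_j\wedge\omega$ and the reducedness of $V$ are exactly what make this work, and the two approaches involve comparable amounts of bookkeeping.
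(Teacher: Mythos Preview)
The paper does not prove this theorem itself; it is quoted from Aleksandrov \cite{Ale}. The Remark following Proposition~\ref{propx} does, however, record the mechanism behind Aleksandrov's argument: for each multi-index $I$ with $|I|=k$ one has the Cramer-type identity (\ref{3p9}), which after division by $f_1\cdots f_k$ already gives a decomposition of the shape (\ref{expr110}) with $g=\Delta_I$ and $\xi$ as in (\ref{3p7}); taking $g=\sum_I\lambda_I\Delta_I$ a generic $\OO_U$-combination of the $k\times k$ Jacobian minors (formula (\ref{3p8})) yields a $g$ regular on $V$ precisely because $V$ is reduced. Thus the intended proof is direct and explicit, not inductive.

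Your inductive scheme via iterated Saito divisions is a genuinely different route, and the outline is plausible, but there is a real gap where you assemble the final multiplier. You obtain $g_1$ regular on $\ddd_1$ and $\tilde g$ regular on $\ddd_2\cap\cdots\cap\ddd_k$, and then assert that their product (times the normalizing factor) is regular on $V$ because each factor is ``regular on a subvariety containing $V$.'' That implication is false in general: a function not vanishing identically on any component of a larger variety can perfectly well vanish on a component of the smaller $V$. The repair is to strengthen the inductive statement so that at each stage the multiplier may be chosen from the appropriate Jacobian ideal and regular on a \emph{prescribed} reduced subvariety; this works because, by Laplace expansion along the row of $f_1$, the $(k-1)\times(k-1)$ minors of $\mathrm{Jac}(f_2,\dots,f_k)$ cannot all vanish on a component of $V$ without forcing all $k\times k$ minors to vanish there too, contradicting reducedness. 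As written, though, the induction does not close, and your second, geometric sketch has the analogous lacuna at the extension step. The direct Jacobian-minor argument indicated in the paper's Remark sidesteps this bookkeeping entirely, which is what it buys over your approach.
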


\hyphenation{res-tric-tion}

Since $g $ is not identically zero on every irreducible component of $V$, the restriction  $$\displaystyle \frac{\xi }{g }|_{V}$$  is well defined and it is called  the  {\it multiple residue} of the meromorphic $q$-form $\omega $.

\begin{prop} [Aleksandrov \cite{Ale}] \label{propx}
The multiple residues  of the  meromorphic $q$-form $\omega$ do not depend on the  decomposition (\ref{expr110}).
\end{prop}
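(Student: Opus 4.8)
The plan is to reduce the independence statement to a vanishing lemma of Saito--Aleksandrov type, namely: \emph{if $\psi$ is a holomorphic $(q-k)$-form on $U$ with
\begin{equation*}
\frac{df_{1}}{f_{1}}\wedge\cdots\wedge\frac{df_{k}}{f_{k}}\wedge\psi \ \in\ \sum_{i=1}^{k}\Omega^{q}_{U}(\hat{\ddd}_i),
\end{equation*}
then $i^{*}\psi=0$, where $i\colon V\hookrightarrow U$.} Granting this, let $g\omega=\tfrac{df_{1}}{f_{1}}\wedge\cdots\wedge\tfrac{df_{k}}{f_{k}}\wedge\xi+\eta$ and $g'\omega=\tfrac{df_{1}}{f_{1}}\wedge\cdots\wedge\tfrac{df_{k}}{f_{k}}\wedge\xi'+\eta'$ be two decompositions as in (\ref{expr110}). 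Multiplying the first by $g'$, the second by $g$, and subtracting gives $\tfrac{df_{1}}{f_{1}}\wedge\cdots\wedge\tfrac{df_{k}}{f_{k}}\wedge(g'\xi-g\xi')=g\eta'-g'\eta\in\sum_{i}\Omega^{q}_{U}(\hat{\ddd}_i)$, so the lemma yields $i^{*}(g'\xi-g\xi')=0$. Since $g$ and $g'$ do not vanish identically on any irreducible component of $V$, the restrictions $g|_{V},g'|_{V}$ are non-zero-divisors on each component, and dividing by $(gg')|_{V}$ gives $\tfrac{\xi}{g}|_{V}=\tfrac{\xi'}{g'}|_{V}$, which is the assertion of the Proposition.

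To prove the lemma I would first clear denominators. Any element of $\sum_{i}\Omega^{q}_{U}(\hat{\ddd}_i)$ is a sum $\sum_{i}\beta_i$ with $\big(\prod_{j\neq i}f_{j}\big)\beta_i$ holomorphic; hence $f_{1}\cdots f_{k}\,\beta_i=f_{i}\gamma_i$ with $\gamma_i:=\big(\prod_{j\neq i}f_{j}\big)\beta_i$ holomorphic. Multiplying the hypothesis by $f_{1}\cdots f_{k}$ therefore produces a relation between holomorphic forms on $U$:
\begin{equation*}
df_{1}\wedge\cdots\wedge df_{k}\wedge\psi \ =\ \sum_{i=1}^{k}f_{i}\,\gamma_i .
\end{equation*}
Next I would work near a point $p$ of the dense Zariski-open set $V^{\circ}\subseteq V$ on which $df_{1}\wedge\cdots\wedge df_{k}$ does not vanish; this set is dense because, by assumption, $df_{1}\wedge\cdots\wedge df_{k}$ is not identically zero on any component of the reduced, pure-codimension-$k$ variety $V$, and near its points $V=\{f_{1}=\cdots=f_{k}=0\}$ is a smooth complete intersection. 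There one can complete $f_{1},\dots,f_{k}$ to a holomorphic coordinate system $(z_{1},\dots,z_{n})$ with $z_{i}=f_{i}$, so $V=\{z_{1}=\cdots=z_{k}=0\}$ locally. Writing $\psi=\sum_{|I|=q-k}a_{I}\,dz_{I}$ and comparing, on both sides of the displayed relation, the coefficients of the monomials $dz_{1}\wedge\cdots\wedge dz_{k}\wedge dz_{I}$ with $I\subseteq\{k+1,\dots,n\}$ forces $a_{I}\in(z_{1},\dots,z_{k})$ for each such $I$; since $i^{*}dz_{j}=0$ for $j\le k$, this gives $i^{*}\psi=\sum_{I\subseteq\{k+1,\dots,n\}}(a_{I}|_{V})\,dz_{I}=0$ on $V^{\circ}$.

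Finally, $i^{*}\psi$ vanishes on the dense open set $V^{\circ}$, hence identically, and by the first paragraph the multiple residue $\tfrac{\xi}{g}|_{V}$ is independent of the decomposition. The main obstacle I expect is the last reduction: one must be careful that the multiple residue is regarded as an object on $V$ (a meromorphic form on $V$, equivalently a form on its smooth locus) that is detected on a dense Zariski-open subset, so that the pointwise computation at generic points of $V$ genuinely suffices; the remaining steps are routine bookkeeping with the pole orders defining $\Omega^{q}_{U}(\hat{\ddd}_i)$ and the Koszul-type coefficient comparison.
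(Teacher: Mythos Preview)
The paper does not supply its own proof of this proposition: it is quoted as a result of Aleksandrov \cite{Ale} and stated without argument, so there is nothing to compare your attempt against directly. Your argument is correct and is essentially the standard one: reduce to the vanishing lemma, clear denominators to obtain $df_{1}\wedge\cdots\wedge df_{k}\wedge\psi\in\sum_i f_i\,\Omega^{q}_{U}$, verify the vanishing of $i^{*}\psi$ at smooth points of $V$ by completing $f_1,\dots,f_k$ to coordinates, and extend by density. The concern you raise at the end is the right one, but it is harmless here: the multiple residue is by definition a meromorphic section of $\Omega^{q-k}_{V}$, and since $V$ is reduced, equality on the dense smooth locus $V^{\circ}$ forces equality on $V$.

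It is worth noting that the paper does carry out a closely related independence argument later, in the proof of Theorem~\ref{prop9}, where it must show that $(\xi_\alpha/g_\alpha)|_V$ transforms correctly under change of chart. There the authors take a more algebraic route: rather than passing to smooth points, they invoke Saito's generalized de~Rham lemma \cite{S2} to conclude that $\D^{r}$ times the discrepancy lies in $\sum_j df_{\beta,j}\wedge\Omega^{k-1}_{U_\beta}$, where $\D$ is the ideal of maximal Jacobian minors, and then use a depth argument \cite{BR} to produce a non-zero-divisor $D\in\D$ on $\OO_{V}$ annihilating the discrepancy. That technique would prove your vanishing lemma as well, without the smooth-locus reduction; conversely, your geometric argument is more elementary and self-contained. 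Either approach is adequate for the proposition.
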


\begin{obs}

It follows from  \cite[remark 2]{Ale} that in the  decomposition (\ref{expr110}), the function $g $ belongs to the ideal of $\OO(U )$ generated by all $k\times k$ minors of the  Jacobian matrix 
$\mathrm{Jac}(f_1,\ldots f_{ k})$ of the  map
$$
z\in U\longmapsto (f_{ 1}(z),\ldots, f_{  k}(z))\in \CC^k.
$$
 In other words, if for each multi-index $I = (i_1,\ldots,i_k)$, $1\leq i_1,\ldots,i_k\leq n$, we denote the corresponding minor of  $ \mathrm{Jac}(f_{ 1},\ldots f_ {k})$ by 
\begin{eqnarray}\nonumber
\Delta_{I} = \det \left[ \frac{\partial f_{ i}}{\partial z_{i_r}}\right ], \,\,\,\,\,1\leq i, r\leq k,
\end{eqnarray}
\noindent then $g $ can be written as
\begin{eqnarray}\label{3p8}
g = \sum_{\mid I\mid = k}\lambda_I\, \Delta_{I}, \,\,\,\,\,\mbox{ $\lambda_I\in \OO(U )$.}
\end{eqnarray}
\noindent Moreover, if the meromorphic $q$-form $\omega $ is represented in $U $ by
\begin{eqnarray}\nonumber
\omega  = \frac{1}{f_{1} \cdot \ldots \cdot f_{ k}}\sum_{\mid J\mid = q}a_J(z)\, dZ_J, \,\,\,\,\,\mbox{$a_J\in \OO(U )$ },
\end{eqnarray}
\noindent then for each minor $\Delta_{I}$  the following identity holds:
\begin{eqnarray}\label{3p9}
\Delta_{I}  \sum_{\mid J\mid = q}a_J \, dZ_J = \\\nonumber = 
\nonumber df_{ 1}\wedge \cdots \wedge df_{ k} \wedge \left( \sum_{\mid I'\mid = q-k}a_{(I,I')}\,dZ_{I'}\right)  +  (f_{ 1}  \cdots   f_{ k})\, \eta .
\end{eqnarray}
\noindent Thus, for the special case where $q = k$, we have that $\xi  \in \OO(U )$ is given by
\begin{eqnarray}\label{3p7}
\xi = \sum_{\mid I\mid = k}\lambda_I\, a_{I} \,.
\end{eqnarray}

\end{obs}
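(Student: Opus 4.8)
The plan is to reduce everything to one algebraic identity and then read off (\ref{3p8}) and (\ref{3p7}) from it. Write $\Phi = (f_1\cdots f_k)\,\omega = \sum_{|J|=q} a_J\, dZ_J$ for the holomorphic numerator of $\omega$ (holomorphic because $\omega$ has simple poles along $\ddd=\{f_1\cdots f_k=0\}$), and recall the expansion $df_1\wedge\cdots\wedge df_k = \sum_{|I|=k}\Delta_I\, dZ_I$, which identifies the maximal minors $\Delta_I$ as the coefficients of the $k$-form $df_1\wedge\cdots\wedge df_k$. First I would reinterpret the hypothesis of Theorem \ref{teo_alec}: since $\Omega^{q+1}_U(\hat{\ddd}_i)$ consists of forms $\beta_i/\prod_{l\neq i}f_l$ with $\beta_i$ holomorphic, the condition $df_j\wedge\omega\in\sum_i\Omega^{q+1}_U(\hat{\ddd}_i)$ is equivalent, after clearing denominators, to $df_j\wedge\Phi\in(f_1,\ldots,f_k)\,\Omega^{q+1}_U$ for every $j$. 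Dually, multiplying $\eta\in\sum_i\Omega^q_U(\hat{\ddd}_i)$ by $f_1\cdots f_k$ produces exactly the submodule $(f_1,\ldots,f_k)\,\Omega^q_U$. Hence formula (\ref{3p9}) is nothing but the congruence
\[
\Delta_I\,\Phi \;\equiv\; df_1\wedge\cdots\wedge df_k\wedge\Big(\sum_{|I'|=q-k} a_{(I,I')}\,dZ_{I'}\Big)\pmod{(f_1,\ldots,f_k)\,\Omega^q_U},
\]
and, once (\ref{3p9}) is known, the decomposition (\ref{expr110}) is recovered by a suitable linear combination over $I$.

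The heart of the argument is thus the identity (\ref{3p9}), which I would prove working modulo $(f_1,\ldots,f_k)$. Because $V$ is reduced of codimension $k$ and is cut out by the $k$ functions $f_1,\ldots,f_k$, the ideal $(f_1,\ldots,f_k)$ is radical, and the quotient $\Omega^q_U/(f_1,\ldots,f_k)\Omega^q_U=\Omega^q_U\otimes_{\O_U}\O_V$ is the restriction of the locally free sheaf $\Omega^q_U$ to $V$; a congruence modulo $(f_1,\ldots,f_k)$ therefore amounts to equality of restrictions to $V$, which can be tested pointwise on a dense subset of $V$. At a smooth point $p\in V$ where $df_1(p),\ldots,df_k(p)$ are linearly independent (equivalently some $\Delta_I(p)\neq0$), the reinterpreted hypothesis gives $df_j(p)\wedge\Phi(p)=0$ for all $j$, and the exterior-algebra division lemma (a $q$-covector annihilated by $k$ independent covectors is divisible by their wedge product) yields $\Phi(p) = df_1(p)\wedge\cdots\wedge df_k(p)\wedge\beta$. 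Applying Cramer's rule with respect to the nonzero minor $\Delta_I(p)$ makes $\beta$ explicit, namely $\beta=\Delta_I(p)^{-1}\sum_{|I'|=q-k} a_{(I,I')}(p)\,dZ_{I'}$, and clearing the denominator $\Delta_I(p)$ gives (\ref{3p9}) at $p$; by reducedness it then holds on all of $V$.

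With (\ref{3p9}) established, (\ref{3p8}) and (\ref{3p7}) are formal. The hypothesis that $df_1\wedge\cdots\wedge df_k$ does not vanish identically on any irreducible component of $V$ means exactly that the minors $\{\Delta_I\}_{|I|=k}$ do not all vanish on any component; hence one may choose constants $\lambda_I$ so that $g=\sum_{|I|=k}\lambda_I\Delta_I$ is not identically zero on any component (each component imposes a proper linear condition on $(\lambda_I)$, and there are only finitely many to avoid). This $g$ manifestly lies in the ideal generated by the $k\times k$ minors, which is (\ref{3p8}). Forming the same $\lambda_I$-combination of the identities (\ref{3p9}) gives $g\,\Phi = df_1\wedge\cdots\wedge df_k\wedge\xi + (f_1\cdots f_k)\eta$ with $\xi=\sum_{|I|=k}\lambda_I\sum_{|I'|=q-k}a_{(I,I')}\,dZ_{I'}$; dividing by $f_1\cdots f_k$ returns the decomposition (\ref{expr110}). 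In the special case $q=k$ the inner sum degenerates, $I'=\emptyset$ and $a_{(I,\emptyset)}=a_I$, so $\xi=\sum_{|I|=k}\lambda_I a_I$ is a holomorphic function, which is precisely (\ref{3p7}).

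The step I expect to be the main obstacle is the explicit Cramer computation in the second paragraph: identifying $\beta$ with the stated coefficients requires a careful Laplace expansion and a consistent sign and ordering convention for the concatenated multi-index $(I,I')$ appearing in $a_{(I,I')}$, and one must also treat points where the chosen $\Delta_I$ vanishes (there both sides of (\ref{3p9}) degenerate, and continuity across such points again uses the reducedness of $V$). Invoking the division lemma and Cramer's rule pointwise, rather than attempting to divide forms globally, is what keeps this bookkeeping under control.
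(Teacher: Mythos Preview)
Your argument is correct, and in fact supplies considerably more than the paper does: in the paper this statement is a \emph{remark} that simply records facts extracted from Aleksandrov's paper \cite{Ale}, without proof. So there is no ``paper's own proof'' to compare against; you have instead given a self-contained justification of those facts.

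A couple of minor comments. First, your claim that $(f_1,\ldots,f_k)$ is radical is not an immediate consequence of ``$V$ reduced'' taken set-theoretically; the clean justification is that $(f_1,\ldots,f_k)$ is a complete intersection ideal (since $V$ has pure codimension $k$) in the Cohen--Macaulay ring $\OO_U$, hence has no embedded components, and is generically reduced because $df_1\wedge\cdots\wedge df_k$ does not vanish identically on any component of $V$ --- so it is reduced. You essentially use this, but it is worth stating the reason. Second, the pointwise Cramer step you flag as the main obstacle is most cleanly packaged via interior products: for $\Theta=df_1\wedge\cdots\wedge df_k$ one has the identity $\Theta\wedge\iota_{\partial_I}\Phi=\Delta_I\,\Phi$ whenever $df_j\wedge\Phi=0$ for all $j$, since applying $\iota_{\partial_I}$ to $\Phi=\Theta\wedge\beta$ and rewedging with $\Theta$ kills every term except $(\iota_{\partial_I}\Theta)\beta=\Delta_I\beta$. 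This produces $\iota_{\partial_I}\Phi=\sum_{|I'|=q-k}\pm a_{(I,I')}\,dZ_{I'}$ with signs fixed by the chosen ordering convention on $(I,I')$, and handles simultaneously the points where the particular $\Delta_I$ vanishes. With that device in hand your outline goes through without difficulty.
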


\begin{prop}\label{prop_dec}
Let $\omega\in \mathrm H^0(X,\Omega_X^p\otimes \NNN)$ be  a Pfaff system of rank $p$  on a complex manifold $X$, 
and  $V\subset X$ a reduced local complete intersection subvariety  of codimension $k$ which is  invariant by $\omega$.
Then for all local representations  $\omega_{\alpha}= \omega|_{U_\alpha}$  of $\omega$,  and all local expressions  of $V$ in $U_\alpha$
$$
V\cap U_{\alpha} = \{z\in U_{\alpha}: f_{\alpha,1}(z)=\cdots= f_{\alpha,k}(z) = 0 \},
$$
 there exist a holomorphic function $g_{\alpha}\in\OO(U_{\alpha})$, a holomorphic $(p-k)$-form $\xi_{\alpha}\in\Omega^{p-k}_{U_{\alpha}}$ and a holomorphic $p$-form $\eta_{\alpha}\in\Omega^{p}_{U_{\alpha}}$, such that
\begin{eqnarray}\label{expr1110}
g_{\alpha}\, \omega_{\alpha} = df_{\alpha,1}\wedge\cdots \wedge df_{\alpha,k} \wedge  \xi_{\alpha} + \eta_{\alpha}.
\end{eqnarray}
\noindent Moreover, $g_{\alpha}$  is not identically zero on every irreducible component of  $V$ and $\eta_{\alpha}$ is given by
\begin{eqnarray}\nonumber
\eta_{\alpha} = f_{\alpha,1}\,\eta_{\alpha,1}+\cdots+f_{\alpha,1}\,\eta_{\alpha,k},
\end{eqnarray}
\noindent where each $\eta_{\alpha,i}\in\Omega^{p}_{U_{\alpha}}$ is a holomorphic $p$-form.

\end{prop}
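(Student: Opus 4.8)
The plan is to deduce Proposition~\ref{prop_dec} from Aleksandrov's decomposition (Theorem~\ref{teo_alec}) applied to a suitably constructed meromorphic form. First I would fix $\alpha$, write $f_i := f_{\alpha,i}$, $\omega_0 := \omega_\alpha$, and $D_i := \{f_i = 0\}$, so that $\ddd = D_1 \cup \cdots \cup D_k$ and $V = D_1 \cap \cdots \cap D_k$ has pure codimension $k$ because $V$ is a local complete intersection; since $V$ is reduced, $df_1 \wedge \cdots \wedge df_k$ is not identically zero on any component of $V$, exactly as required in the setup of Theorem~\ref{teo_alec}. The natural candidate is the meromorphic $p$-form
\begin{eqnarray}\nonumber
\widetilde\omega := \frac{\omega_0}{f_1 \cdots f_k} \in \Omega^p_{U_\alpha}(\ddd).
\end{eqnarray}
I would then verify the hypothesis of Theorem~\ref{teo_alec}, namely that $df_j \wedge \widetilde\omega \in \sum_{i=1}^k \Omega^{p+1}_{U_\alpha}(\hat{\ddd}_i)$ for each $j$. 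This is where the invariance of $V$ enters: equation~(\ref{3p2}) gives holomorphic $(p+1)$-forms $\theta^\alpha_{ji}$ with $\omega_0 \wedge df_j = \sum_{i=1}^k f_i\, \theta^\alpha_{ji}$, hence
\begin{eqnarray}\nonumber
df_j \wedge \widetilde\omega = \frac{df_j \wedge \omega_0}{f_1 \cdots f_k} = (-1)^p\sum_{i=1}^k \frac{\theta^\alpha_{ji}}{f_1 \cdots \widehat{f_i} \cdots f_k},
\end{eqnarray}
and each summand lies in $\Omega^{p+1}_{U_\alpha}(\hat{\ddd}_i)$ since its polar locus is contained in $\hat{\ddd}_i$. (I should be a little careful about the sign/ordering in $df_j \wedge \omega_0$ versus $\omega_0 \wedge df_j$, but that is routine.)

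Granting the hypothesis, Theorem~\ref{teo_alec} produces a holomorphic function $g_\alpha$, not identically zero on any irreducible component of $V$, a holomorphic $(p-k)$-form $\xi_\alpha \in \Omega^{p-k}_{U_\alpha}$, and a meromorphic $p$-form $\widetilde\eta \in \sum_{i=1}^k \Omega^p_{U_\alpha}(\hat{\ddd}_i)$ with
\begin{eqnarray}\nonumber
g_\alpha\, \widetilde\omega = \frac{df_1}{f_1}\wedge \cdots \wedge \frac{df_k}{f_k}\wedge \xi_\alpha + \widetilde\eta.
\end{eqnarray}
Multiplying through by $f_1 \cdots f_k$ clears all denominators: the left side becomes $g_\alpha\, \omega_0$, the first term on the right becomes $df_1 \wedge \cdots \wedge df_k \wedge \xi_\alpha$, and $\eta_\alpha := (f_1 \cdots f_k)\,\widetilde\eta$ is holomorphic because $\widetilde\eta$ has at worst simple poles along the components of $\hat{\ddd}_i$. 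This yields (\ref{expr1110}).

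For the final assertion — that $\eta_\alpha = f_1\eta_{\alpha,1} + \cdots + f_k\eta_{\alpha,k}$ with each $\eta_{\alpha,i}$ holomorphic — I would argue from the structure of $\widetilde\eta$: writing $\widetilde\eta = \sum_{i=1}^k \widetilde\eta_i$ with $\widetilde\eta_i \in \Omega^p_{U_\alpha}(\hat{\ddd}_i)$ having a simple pole only along $\hat{\ddd}_i = D_1 \cup \cdots \cup \widehat{D_i} \cup \cdots \cup D_k$, the form $\eta_{\alpha,i} := f_1 \cdots \widehat{f_i} \cdots f_k \cdot \widetilde\eta_i$ is holomorphic, and $\eta_\alpha = \sum_i f_i\, \eta_{\alpha,i}$ by distributing the product $f_1 \cdots f_k = f_i \cdot (f_1 \cdots \widehat{f_i} \cdots f_k)$ over each $\widetilde\eta_i$. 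The one point that needs a word of justification is that the $\widehat{D_i}$-pole of $\widetilde\eta_i$ is genuinely simple, which is part of the definition of $\Omega^p_{U_\alpha}(\hat{\ddd}_i)$, so clearing the $k-1$ linear factors $f_1, \ldots, \widehat{f_i}, \ldots, f_k$ suffices to make it holomorphic. I expect the main obstacle to be purely bookkeeping: tracking the polar orders and the multi-index decomposition of $\widetilde\eta$ carefully enough to see that the final holomorphic forms $\eta_{\alpha,i}$ exist, rather than any substantive geometric difficulty — the geometric content is entirely carried by Aleksandrov's theorem and the invariance identity~(\ref{3p2}).
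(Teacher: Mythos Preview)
Your proposal is correct and follows essentially the same route as the paper: form the meromorphic $p$-form $\omega_\alpha/(f_{\alpha,1}\cdots f_{\alpha,k})$, use the invariance identity~(\ref{3p2}) to check the pole hypothesis of Aleksandrov's Theorem~\ref{teo_alec}, and then clear denominators. In fact you spell out more than the paper does --- the paper simply asserts that ``the decomposition (\ref{expr1110}) follows from decomposition (\ref{expr110})'' without detailing the multiplication by $f_{\alpha,1}\cdots f_{\alpha,k}$ or the extraction of the $\eta_{\alpha,i}$ from the summands of $\widetilde\eta$.
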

\begin{proof} Consider for each  $i \in \{1,\ldots, k\}$ 
\begin{eqnarray}\nonumber
D_i = \{z\in U_{\alpha}: f_{\alpha,i}(z) = 0\},
\end{eqnarray}
and 
\begin{eqnarray}\nonumber
\hat{D}_i = D_1 \cup \cdots \cup D_{i-1} \cup D_{i+1}\cup \cdots \cup D_k.
\end{eqnarray}
\noindent Since $V$ is invariant by $\omega$ it  follows from the  expression (\ref{3p2}) that for each $i \in \{1,\ldots, k\}$, there exist differential $(p+1)$-forms $\theta^{\alpha}_{i1},\ldots, \theta^{\alpha}_{ik}\in
 \Omega^{p+1}_{U_{\alpha}}$, such that
\begin{eqnarray}\nonumber
\omega_{\alpha}\wedge df_{\alpha,i} = f_{\alpha,1}\theta^{\alpha}_{i1} + \cdots + f_{\alpha,k}\theta^{\alpha}_{ik}.
\end{eqnarray}
\noindent  With this, we deduce that
\begin{eqnarray}\nonumber
df_{\alpha,j} \wedge \frac{\omega_{\alpha}}{f_{\alpha}} \in \sum^k_{i=1} \Omega^p_{U_{\alpha}}(\hat{D}_i),\,\,\,\,\,\,\,\,\,\,\,\,\,\,j=1,\ldots,k.
\end{eqnarray}
\noindent Hence, the meromorphic $p$-form $\displaystyle\frac{\omega_{\alpha}}{f_{\alpha}}$ satisfies the hypothesis of Theorem \ref{teo_alec} and the decomposition  ($\ref{expr1110}$) follows from decomposition (\ref {expr110}).
\end{proof}

\noindent The decomposition (\ref{expr1110}) will be called    an {\it  Aleksandrov-Saito's   decomposition of $\omega$} in $U_{\alpha}$.

\section{GSV-index for Pfaff system on projective manifolds}\label{section-coro}

\noindent In this section we define the GSV-index for Pfaff systems on projective manifolds $X$.    

Let $X$ be a projective manifold. Consider a Pfaff system   $\omega\in \mathrm H^0(X,\Omega_X^p\otimes \NNN)$, of rank $p$,  and $V$ a reduced  local complete intersection  subvariety   of pure  codimension $k$ invariant by $\omega$. Let us denote  $\mathrm{Sing}(\omega,V): =\mathrm{Sing}(\omega)\cap V  $. 
We   also assume that the rank of $\omega$ coincides with the codimension of $V$, i.e., $p=k$.
Fixed an irreducible component $S_i$ of  $\mathrm{Sing}(\omega,V)$,   let us take $\omega_{\alpha}= \omega|_{U_\alpha}$, a local representation of $\omega$,  such that $U_{\alpha}\cap S_i \neq \emptyset$. Assume that $\omega_{\alpha}$ is defined by
\begin{eqnarray}\nonumber
\omega_{\alpha} = \sum_{\mid I\mid = k}a_I(z)dZ_I, \,\,\,\,\,\mbox{$a_I\in \OO(U_{\alpha})$}.
\end{eqnarray}
\noindent Also, we  consider  a local expression of $V$ in $U_{\alpha}$, given by\\
$$
V\cap U_{\alpha} = \{z\in U_{\alpha}: f_{\alpha,1}(z)=\cdots= f_{\alpha,k}(z) = 0 \}.
$$
\noindent and let us take an Aleksandrov-Saito's   decomposition  of $\omega$  in $U_{\alpha}$,

\begin{eqnarray}\label{2p10}
g_{\alpha}\,\omega_{\alpha} = (df_{\alpha,1}\wedge\cdots \wedge df_{\alpha,k})\, \xi_{\alpha} + \eta_{\alpha},
\end{eqnarray}
\noindent with $\eta_{\alpha} = f_{\alpha,1}\,\eta_{\alpha,1} + \cdots +f_{\alpha,k}\,\eta_{\alpha,k}$, where $\eta_{\alpha,i}\in \Omega^{k}_{U_{\alpha}}$ and, furthermore, $\xi_{\alpha}$ being a holomorphic function.

In this context, we define the GSV-index:

\begin{defi}Suppose that $S$ is a codimension one subvariety of $V$.  The GSV-index of $\omega$ relative to $V$ in $S$ is defined  by 
\begin{eqnarray}\nonumber
\mathrm{GSV}(\omega, V, S) : = \sum_j \mathrm{ord}_{S}\left(\displaystyle\frac{\xi_{\alpha}}{g_{\alpha}}|_{V_j}\right),
\end{eqnarray}
where the sum is taken over all irreducible components $V_j $ of $V$ and 
 $\mathrm{ord}_{S}\left(\displaystyle\frac{\xi_{\alpha}}{g_{\alpha}}|_{V_j}\right)$ denotes  the order of vanishing of $\frac{\xi_{\alpha}}{g_{\alpha}}|_{V_j}$ along   $S$.
 \end{defi}
We recall  that for a rational function $r$  the order of vanishing is defined by  $\mathrm{ord}_{S}(r) =l_{\mathcal{O}_{S,V_j}}(\mathcal{O}_{S,V_j}/(r))$, 
where $l_{\mathcal{O}_{S,V_j}}(\mathcal{O}_{S,V_j}/(r))$ denotes the length of the   $\mathcal{O}_{S,V_j}$-module $ \mathcal{O}_{S,V_j}/(r)$, see 
 \cite{Fu}. 

Now we will prove our main result. 
\subsection{Proof of Theorem \ref{prop9}}

Firstly,    it  follows  from the definition and  Proposition  \ref{propx} that $\mathrm{GSV}(\omega, V, S_i)$ does not depend on chosen decomposition of $\omega$.
Now, we will prove that $\mathrm{GSV}(\omega, V, S_i)$ does not depend on the local representation $\omega_{\alpha}$ of $\omega$, and does not depend on local expression for  $V$:
$$
V\cap U_{\alpha} = \{z\in U_{\alpha}: f_{\alpha,1}(z)=\cdots= f_{\alpha,k}(z) = 0 \}.
$$
In fact, if we consider another local representation $\omega_{\beta} = \omega|_{U_{\beta}}$, such that $U_{\beta}\cap S_i \neq \emptyset$ and other local expression  for $V$
$$
V\cap U_{\beta} = \{z\in U_{\beta}: f_{\beta,1}(z)=\cdots= f_{\beta,k}(z) = 0 \},
$$
we obtain the Aleksandrov-Saito's decomposition of $\omega$  in $U_{\beta}$
\begin{eqnarray}\label{expr6}
g_{\beta}\,\omega_{\beta} = (df_{\beta,1}\wedge\cdots \wedge df_{\beta,k})\, \xi_{\beta} + \eta_{\beta}
\end{eqnarray}
and the decomposition of $\omega$  in $U_{\alpha}$
\begin{eqnarray}\label{expret01}
g_{\alpha}\,\omega_{\alpha} = (df_{\alpha,1}\wedge\cdots \wedge df_{\alpha,k})\, \xi_{\alpha} + \eta_{\alpha},
\end{eqnarray}
where $ \eta_{\beta}|_{V}=\eta_{\alpha}|_{V}=0$. 
\noindent Now, in the intersection $U_{\alpha}\cap U_{\beta}\neq \emptyset$ we have that 
\begin{eqnarray}\label{11}
(df_{\alpha,1}\wedge\ldots \wedge df_{\alpha,k})  = m_{\alpha\beta}\left(df_{\beta,1}\wedge\ldots \wedge df_{\beta,k}\right)+ \theta_{\alpha,\beta},
\end{eqnarray}
 where $m_{\alpha\beta}|_{V}\in\OO_V(U_{\alpha}\cap U_{\beta} \cap V)^*$  is the  cocycle of the    determinant of the normal bundle $\det (N_{V/X})$ on $V$ and $\theta_{\alpha,\beta}$ is a holomorphic $k$-form such that 
$ \theta_{\alpha,\beta}|_{V}=0$. Also, in  $U_{\alpha}\cap U_{\beta}$ we have that 
\begin{eqnarray}\label{12}
\omega_{\alpha} = h_{\alpha \beta} \omega_{\beta},
\end{eqnarray}
  where $h_{\alpha \beta}, \in\OO(U_{\alpha}\cap U_{\beta})^*$  is the  cocycle of the line bundle $\NNN$. 
 On the one hand, by using (\ref{11})  and (\ref{12}) in (\ref{expret01}), we obtain 
\begin{eqnarray}\label{expr5}
(g_{\alpha}h_{\alpha \beta})\omega_{\beta} = (m_{\alpha\beta} \xi_{\alpha})(df_{\beta,1}\wedge\ldots \wedge df_{\beta,k})+  \eta_{\alpha} +  \theta_{\alpha,\beta}.
\end{eqnarray}
\noindent On the other hand, by  using  ($\ref{expr6}$) in ($\ref{expr5}$), we obtain 
\begin{eqnarray}\nonumber
( h_{\alpha \beta} \xi_{\beta}g_{\alpha}  -     m_{\alpha\beta} \xi_{\alpha}  g_{\beta})    \left( df_{\beta,1}\wedge\ldots \wedge df_{\beta,k}\right) = \eta_{\beta}+g_{\beta}\eta_{\alpha} +  g_{\beta}\theta_{\alpha,\beta}. 
\end{eqnarray}
Since $ ( \eta_{\beta}+g_{\beta}\eta_{\alpha} +  g_{\beta}\theta_{\alpha,\beta})|_{V}\equiv 0$, we conclude that 
\begin{eqnarray}\nonumber
( h_{\alpha \beta} \xi_{\beta}g_{\alpha}  -     m_{\alpha\beta} \xi_{\alpha}  g_{\beta})    \left( df_{\beta,1}\wedge\ldots \wedge df_{\beta,k}\right) \equiv 0 (\mod(f_{\beta,1},\ldots, f_{\beta,k})).
\end{eqnarray}
It follows from    \cite{S2}  that there exists $r\in \mathbb{Z}$, with $r\geq 1$, such that
$$
\D^r ( h_{\alpha \beta} \xi_{\beta}g_{\alpha}  -     m_{\alpha\beta} \xi_{\alpha}  g_{\beta})    \in  df_{\beta,1}\wedge \Omega_{U_{\beta}}^{k-1}+\cdots +df_{\beta,k}\wedge \Omega_{U_{\beta}}^{k-1},
$$
where $\D$ is the ideal of $\OO_{U_{\beta}}$    generated by all minors of maximal order of the Jacobian matrix Jac$(f_{\beta,1}, \dots,f_{\beta,k} ). $
Now, as observed   in \cite[Proposition 2]{Ale}  the image $\mathrm{Im}(\D)$ of $\D$ in the ring $\OO_{V\cap U_{\beta}}$ is not   equal to Ann$(\OO_{V\cap U_{\beta}})$. 
In fact,  since $V$ is reduced, then $df_{\beta,1}\wedge\ldots \wedge df_{\beta,k}$
does not vanish identically on each irreducible component of $V$.
Therefore,  it follows from \cite[Theorem 2.4. (1)]{BR} that the 
 $\OO_{V\cap U_{\beta}}$-depth of the ideal $\D^r $ is at least $1$. Then, there is $D\in \D$ which
is not a zero-divisor in  $\OO_{V\cap U_{\beta}}$ and
$$
D^r ( h_{\alpha \beta} \xi_{\beta}g_{\alpha}  -     m_{\alpha\beta} \xi_{\alpha}  g_{\beta})   \in  df_{\beta,1}\wedge \Omega_{U_{\beta}}^{k-1}+\cdots +df_{\beta,k}\wedge \Omega_{U_{\beta}}^{k-1}.
$$
Thus, the class $D^r ( h_{\alpha \beta} \xi_{\beta}g_{\alpha}  -     m_{\alpha\beta}  \xi_{\alpha}  g_{\beta}) =0 \in \Omega_{V\cap U_{\beta}}^k$. Then  
 $$( h_{\alpha \beta} \xi_{\beta}g_{\alpha}  -     m_{\alpha\beta} \xi_{\alpha}  g_{\beta})|_{V} =0$$
 since $D\in \D$  
is not a zero-divisor.  
We conclude  that 
\begin{eqnarray}\label{expr7}
\displaystyle \frac{\xi_{\alpha}}{g_{\alpha}}|_{V} = h_{\alpha \beta}\,(m_{\alpha\beta})^{-1}\,\displaystyle\frac{\xi_{\beta}}{g_{\beta}}|_{V}.
\hyphenation{su-pport}
\end{eqnarray}
Thus, we obtain a    family of meromorphic    functions ${\frak s}= \{ (\xi_{\alpha}/g_{\alpha})|_V\}_{\alpha \in \Lambda}$ which  defines a rational  section of the line bundle $[\NNN \otimes \det (N_{V/X})^{-1}]|_V$ on $V$ whose support is $\mathrm{Sing}(\omega,V)=\cup_i S_i $ . 
Since  $h_{\alpha \beta}$ and $(m_{\alpha\beta})^{-1}$ are non-vanishing holomorphic functions we  observe that  
\begin{eqnarray}\nonumber
\mathrm{ord}_{S_i}\left(\displaystyle\frac{\xi_{\alpha}}{g_{\alpha}}|_{V}\right) = \mathrm{ord}_{S_i}\left(\displaystyle\frac{\xi_{\beta}}{g_{\beta}}|_{V}\right).
\end{eqnarray}
Therefore, we have  the Cartier  divisor  associated to the section ${\frak s}$ given by 
\begin{eqnarray}\nonumber
({\frak s})_0 = \sum_i \mathrm{GSV}(\omega,V,S_i)[S_i].
\end{eqnarray}
That is
\begin{eqnarray}\nonumber
\OO(({\frak s})_0) \cong [\NNN \otimes \det (N_{V/X})^{-1}]|_V.
\end{eqnarray}
\noindent Thus, by normalization property of the Chern class \cite[Thm. 3.2,(f)]{Fu}  we have 
\begin{eqnarray}\nonumber
 c_1([ \NNN \otimes \det (N_{V/X})^{-1}]|_V) &=& c_1(\OO(({\frak s})_0)) \\\nonumber\\\nonumber   &=&  \sum_i \mathrm{GSV}(\omega,V,S_i)[S_i]. 
\end{eqnarray}
Therefore
\begin{eqnarray}\nonumber
\sum_i \mathrm{GSV}(\omega,V,S_i)[S_i] =  c_1([ \NNN \otimes \det (N_{V/X})^{-1}]|_V)  \frown [V].
\end{eqnarray}

The next result says us how   to calculate the GSV-index which  can be compared with  Suwa's  formula in \cite[Proposition 5.1 ]{Suw3}.

\begin{teo}\label{teo404}
For each multi-index $I$, with $\mid I \mid = k$, the following formula holds 
\begin{eqnarray}\nonumber
\mathrm{GSV}(\omega,V,S_i) = \mathrm{ord}_{S_i}(a_{I\,}|_V) - \mathrm{ord}_{S_i}(\Delta_{I\,}|_V).
\end{eqnarray}
\noindent where $\Delta_I$ is the $k\times k$  minor of the Jacobian matrix   $ \mathrm{Jac}(f_{\alpha,1},\ldots,f_{\alpha,k})$, corresponding to the multi-index $I$.
\end{teo}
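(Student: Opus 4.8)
The plan is to extract the number $\mathrm{GSV}(\omega,V,S_i)$ directly from an explicit Aleksandrov--Saito decomposition of $\omega$, using the minor-expansion formulas recorded in the Remark following Proposition~\ref{propx}. First I would fix the local representation $\omega_\alpha = \sum_{|I|=k} a_I\, dZ_I$ on $U_\alpha$ and the local equations $f_{\alpha,1},\dots,f_{\alpha,k}$ for $V$, exactly as in the statement, and I would pass to the meromorphic form $\omega_\alpha/f_\alpha$ with $f_\alpha = f_{\alpha,1}\cdots f_{\alpha,k}$, to which Theorem~\ref{teo_alec} applies by Proposition~\ref{prop_dec}. For a single fixed minor $\Delta_I$, formula~(\ref{3p9}) gives
\begin{eqnarray}\nonumber
\Delta_I \sum_{|J|=k} a_J\, dZ_J = df_{\alpha,1}\wedge\cdots\wedge df_{\alpha,k}\wedge a_{(I,I')} \cdot (\text{scalar}) + (f_{\alpha,1}\cdots f_{\alpha,k})\,\eta,
\end{eqnarray}
where in the top-degree case $q=k$ the ``residual'' coefficient is simply $a_I$ itself (take $\lambda_I = 1$, $\lambda_{I''}=0$ for $I''\neq I$ in~(\ref{3p7})). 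Thus the pair $g_\alpha = \Delta_I$, $\xi_\alpha = a_I$, together with the corresponding $\eta_\alpha$, is a legitimate Aleksandrov--Saito decomposition of $\omega$ in $U_\alpha$, valid on the open set where $\Delta_I$ is not identically zero on the components of $V$.

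Next I would invoke Theorem~\ref{prop9}: the quantity $\mathrm{GSV}(\omega,V,S_i)$ is independent of the chosen decomposition, hence can be computed from this particular one, giving
\begin{eqnarray}\nonumber
\mathrm{GSV}(\omega,V,S_i) = \mathrm{ord}_{S_i}\!\left(\frac{\xi_\alpha}{g_\alpha}\Big|_V\right) = \mathrm{ord}_{S_i}\!\left(\frac{a_I|_V}{\Delta_I|_V}\right) = \mathrm{ord}_{S_i}(a_I|_V) - \mathrm{ord}_{S_i}(\Delta_I|_V),
\end{eqnarray}
where the last equality uses that $\mathrm{ord}_{S_i}$ is a valuation (additive on products and quotients) along the irreducible hypersurface $S_i$ of $V$. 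This is the claimed formula.

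The one genuine subtlety, and the step I expect to be the main obstacle, is that $\Delta_I$ may vanish identically on some irreducible component of $V$ through $S_i$, in which case $g_\alpha = \Delta_I$ is \emph{not} an admissible choice (the Remark requires $g$ non-vanishing on every component of $V$, so that $\xi/g|_V$ is well defined). I would handle this by first choosing, for the fixed component $S_i$, a multi-index $I_0$ with $\Delta_{I_0}$ not identically zero on the relevant component of $V$ — such $I_0$ exists because $df_{\alpha,1}\wedge\cdots\wedge df_{\alpha,k}$ does not vanish identically on any component of $V$ (reducedness, as used in the proof of Theorem~\ref{prop9}). For this $I_0$ the computation above is valid verbatim. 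For an arbitrary $I$, I would compare $\Delta_I$ with $\Delta_{I_0}$: applying~(\ref{3p9}) with $\lambda_{I}=1$ and the identity $g_\alpha\omega_\alpha$-decomposition, one gets $\Delta_{I_0}\,a_I \equiv \Delta_I\,a_{I_0} \ \ (\mathrm{mod}\,(f_{\alpha,1},\dots,f_{\alpha,k}))$ as coefficients, hence $(a_I/\Delta_I)|_V = (a_{I_0}/\Delta_{I_0})|_V$ as meromorphic functions on $V$ whenever both sides are defined; taking $\mathrm{ord}_{S_i}$ and rearranging yields $\mathrm{ord}_{S_i}(a_I|_V) - \mathrm{ord}_{S_i}(\Delta_I|_V) = \mathrm{GSV}(\omega,V,S_i)$ even in the degenerate case, now read as an equality of extended integers if $\Delta_I|_V \equiv 0$ along $S_i$. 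I would also remark that the independence of $I$ is exactly consistent with~(\ref{expr7}): changing the local frame rescales all $a_I$ and all $\Delta_I$ by the same cocycles, so the difference of orders is intrinsic.
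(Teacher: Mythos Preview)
Your argument is correct and uses the same raw ingredients as the paper --- formula~(\ref{3p9}), the induced identity $(\Delta_I\,a_J)|_V=(\Delta_J\,a_I)|_V$, and additivity of $\mathrm{ord}_{S_i}$ --- but the organisation is different in a way worth noting. You try to take $(g_\alpha,\xi_\alpha)=(\Delta_I,a_I)$ as a ready-made Aleksandrov--Saito decomposition; this is clean when it works, but forces you into the case split over whether $\Delta_I$ vanishes on a component of $V$, and then into choosing an auxiliary $I_0$ and comparing via the cross-relation. The paper instead keeps an \emph{arbitrary} admissible decomposition $(g_\alpha,\xi_\alpha)$ (so $g_\alpha$ is already non-vanishing on every component), derives the same cross-relation $(\Delta_I\,a_J)|_V=(\Delta_J\,a_I)|_V$, and then, using $g_\alpha=\sum_J\lambda_J\Delta_J$ and $\xi_\alpha=\sum_J\lambda_J a_J$, obtains the multiplicative identity $\xi_\alpha\,\Delta_I|_V=g_\alpha\,a_I|_V$ for \emph{every} $I$ at once; dividing by $g_\alpha$ and taking $\mathrm{ord}_{S_i}$ finishes without any case distinction. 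Your route has the virtue of making transparent that $(\Delta_I,a_I)$ is itself a decomposition whenever admissible; the paper's route is more uniform and sidesteps the admissibility issue entirely.
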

\begin{proof} By the  expression (\ref{3p9}) we have 
\begin{eqnarray}\nonumber
(\Delta_{I} . \sum_{\mid J\mid = k}a_J \, dZ_J)|_V = [ (df_{\alpha,1}\wedge \ldots \wedge df_{\alpha,k})\, a_{I}]|_V.
\end{eqnarray}
\noindent Since
\begin{eqnarray}\nonumber
df_{\alpha,1}\wedge \ldots \wedge df_{\alpha,k} = \sum_{\mid J\mid = k}\Delta_J \, dZ_J,
\end{eqnarray}
\noindent  for each $J$, with $\mid J\mid = k$,  we get 
\begin{eqnarray}\label{3p10}
(\Delta_I \, a_J)|_V = (\Delta_J \, a_I)|_V.
\end{eqnarray}
\noindent Thus,
\begin{eqnarray}\nonumber
\mathrm{GSV}(\omega, V, S_i) &=& \mathrm{ord}_{S_i}\left(\frac{\xi_{\alpha}}{g_{\alpha}}|_V\right)\\\nonumber\\\nonumber\\\nonumber
& = & \mathrm{ord}_{S_i}\left(\frac{\xi_{\alpha}}{g_{\alpha}}|_V\right) + \mathrm{ord}_{S_i}(\Delta_I|_V) - \mathrm{ord}_{S_i}(\Delta_I|_V)  \\\nonumber\\\nonumber\\\nonumber
& = & \mathrm{ord}_{S_i}\left(\frac{\xi_{\alpha}\,\Delta_I}{g_{\alpha}}|_V\right) - \mathrm{ord}_{S_i}(\Delta_I|_V).
\end{eqnarray}
\noindent By (\ref{3p7}) we have 
\begin{eqnarray}\nonumber
\xi_{\alpha} = \sum_{\mid J\mid = k}\lambda_J \, a_J.
\end{eqnarray}
\noindent Hence, by using (\ref{3p10}) we obtain
\begin{eqnarray}\nonumber
\xi_{\alpha}\,\Delta_I|_V &=& \sum_{\mid J\mid = k}\lambda_J \, (a_J\,\Delta_I)|_V=\sum_{\mid J\mid = k}\lambda_J \, (\Delta_J\,a_I)|_V\\\nonumber\\\nonumber\\\nonumber
&=& (\sum_{\mid J\mid = k}\lambda_J \, \Delta_J)\,a_I |_V=  g_{\alpha}\,a_I |_V,    
\end{eqnarray}
\noindent where in the last step we have used (\ref{3p8}).
Thus,
\begin{eqnarray}\nonumber
\mathrm{ord}_{S_i}\left(\frac{\xi_{\alpha}\,\Delta_I}{g_{\alpha}}|_V\right) = \mathrm{ord}_{S_i}\left(a_{I\,}|_V\right).
\end{eqnarray}
Then 
\begin{eqnarray}\nonumber
\mathrm{GSV}(\omega,V,S_i) = \mathrm{ord}_{S_i}(a_{I\,}|_V) - \mathrm{ord}_{S_i}(\Delta_{I\,}|_V).
\end{eqnarray}
\end{proof}

\begin{cor}\label{4p4}
If $S_i\cap \mathrm{Sing}(V)= \emptyset$, then $\mathrm{GSV}(\omega,V,S_i)\geq 0$.
\end{cor}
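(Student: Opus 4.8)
The plan is to use the formula from Theorem~\ref{teo404}, namely $\mathrm{GSV}(\omega,V,S_i) = \mathrm{ord}_{S_i}(a_I|_V) - \mathrm{ord}_{S_i}(\Delta_I|_V)$, and to exploit the freedom in the choice of the multi-index $I$ together with the hypothesis $S_i\cap\mathrm{Sing}(V)=\emptyset$. The key observation is that at a smooth point of $V$ the functions $f_{\alpha,1},\dots,f_{\alpha,k}$ form part of a local coordinate system, so the Jacobian matrix $\mathrm{Jac}(f_{\alpha,1},\dots,f_{\alpha,k})$ has rank $k$ at every point of $V\cap U_\alpha$ near $S_i$; hence \emph{some} $k\times k$ minor $\Delta_{I_0}$ is a non-vanishing unit in a neighborhood of any point of $S_i$, so $\mathrm{ord}_{S_i}(\Delta_{I_0}|_V)=0$.

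First I would fix a point $x\in S_i$ and shrink $U_\alpha$ so that $x\in U_\alpha$, $S_i\cap U_\alpha\subset V\cap U_\alpha$ is smooth, and $V\cap U_\alpha$ is smooth. Because $V$ is a local complete intersection and $x\notin\mathrm{Sing}(V)$, the differentials $df_{\alpha,1},\dots,df_{\alpha,k}$ are linearly independent at $x$ (and, after possibly shrinking $U_\alpha$, at every point of $V\cap U_\alpha$). Therefore there is a multi-index $I_0$ with $|I_0|=k$ such that $\Delta_{I_0}(x)\neq 0$, and by continuity $\Delta_{I_0}$ is nowhere zero on $V$ in a neighborhood of $x$; shrinking further, $\Delta_{I_0}|_V$ is a unit on $V\cap U_\alpha$, giving $\mathrm{ord}_{S_i}(\Delta_{I_0}|_V)=0$. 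Applying Theorem~\ref{teo404} with $I=I_0$ then yields $\mathrm{GSV}(\omega,V,S_i)=\mathrm{ord}_{S_i}(a_{I_0}|_V)$, which is $\geq 0$ since $a_{I_0}|_V$ is a holomorphic function on $V$ (the order of vanishing of a holomorphic function along an irreducible subvariety is non-negative, and it is finite because, as noted in the discussion preceding the definition of the index, $\xi_\alpha/g_\alpha|_V\not\equiv 0$ so in particular $a_{I_0}|_V\not\equiv 0$ on $S_i$ after an appropriate choice — or one simply argues that the left side of the displayed identity $(\Delta_{I_0} a_J)|_V=(\Delta_J a_{I_0})|_V$ forces $a_{I_0}|_V$ to be a genuine, not identically zero, holomorphic function representing the section $\mathfrak s$ up to the unit $\Delta_{I_0}$).

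I expect the only real subtlety to be the independence of the conclusion from the choice of $x\in S_i$ and of $I_0$: different points of $S_i$ might require different minors $\Delta_{I_0}$. This is harmless because $\mathrm{GSV}(\omega,V,S_i)$ is already known (Theorem~\ref{prop9}) to be a well-defined number attached to $S_i$, independent of all local choices, so computing it via one valid choice of $I_0$ near one point $x$ suffices; alternatively, one notes that $\mathrm{ord}_{S_i}$ is defined by the generic order along $S_i$, so it can be computed at a general point of $S_i$, where a single $I_0$ works on a neighborhood. Thus the corollary follows: at points $x\in S_i$ with $x\notin\mathrm{Sing}(V)$ one has a unit minor $\Delta_{I_0}$, and Theorem~\ref{teo404} reduces $\mathrm{GSV}(\omega,V,S_i)$ to the order of vanishing of a holomorphic function, which is non-negative.
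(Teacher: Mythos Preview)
Your argument is correct and is essentially the same as the paper's: both invoke Theorem~\ref{teo404} and the fact that away from $\mathrm{Sing}(V)$ some $k\times k$ minor $\Delta_{I_0}$ of the Jacobian is a unit, whence $\mathrm{ord}_{S_i}(\Delta_{I_0}|_V)=0$ and the index reduces to $\mathrm{ord}_{S_i}(a_{I_0}|_V)\ge 0$. The only cosmetic difference is that the paper phrases this contrapositively (if $\mathrm{GSV}<0$ then every $\Delta_I$ vanishes along $S_i$, forcing $S_i\subset\mathrm{Sing}(V)$), whereas you argue directly.
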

\begin{proof} By Theorem \ref{teo404}, 
for each multi-index
 $I$, with $\mid I \mid = k$, we have 
\begin{eqnarray}\label{4p3}
\mathrm{GSV}(\omega,V,S_i) = \mathrm{ord}_{S_i}(a_{I\,}|_V) - \mathrm{ord}_{S_i}(\Delta_{I\,}|_V).
\end{eqnarray}
\noindent Let $h_i\in \OO_V(U_{\alpha} \cap V)$ be a function which defines locally $S_i$ in $U_{\alpha}\cap V$, i.e.,
\begin{eqnarray}\nonumber
(U_{\alpha}\cap V)\cap S_i = \{z\in U_{\alpha}\cap V: h_i(z) = 0\}.
\end{eqnarray}
\noindent If $\mathrm{GSV}(\omega,V,S_i) < 0$ then, by (\ref{4p3}), for each multi-index $I$, with $\mid I \mid = k$,
\begin{eqnarray}\nonumber
\mathrm{ord}_{S_i}(\Delta_{I\,}|_V) = \delta_I > 0.
\end{eqnarray}
\noindent Therefore, we obtain 
\begin{eqnarray}\nonumber
\Delta_{I}|_V = h_i^{\delta_I}\,\mu_I|_V,
\end{eqnarray}
\noindent for some function $\mu_I\in \OO_V(U_{\alpha} \cap V)$.
This implies that for each multi-index $I$, with $\mid I \mid = k$, the following inclusion occurs
\begin{eqnarray}\nonumber
\{z\in U_{\alpha}\cap V: h_i(z) = 0\} \subset \{z\in U_{\alpha}\cap V: \Delta_{I}(z) = 0\}.
\end{eqnarray}
\noindent Thus, we conclude  that
\begin{eqnarray}\nonumber
(U_{\alpha}\cap V)\cap S_i = \{z\in U_{\alpha}\cap V: h_i(z) = 0\} \subset \bigcap_{\mid I \mid = k}\{z\in U_{\alpha}\cap V: \Delta_{I}(z) = 0\} = U_{\alpha} \cap \mathrm{Sing}(V)
\end{eqnarray}
\noindent and consequently, we get $S_i\cap \mathrm{Sing}(V)\neq \emptyset$. 
\end{proof}

\begin{cor}\label{4p10}
If $V$ is  smooth, then $\mathrm{GSV}(\omega,V,S_i)>0$.
\end{cor}

\begin{proof} If $V$ is smooth, then there exists some $k\times k$ minor $\Delta_I$ (of Jacobian matrix Jac$(f_{\alpha,1},\ldots,f_{\alpha,k})$) which  is not   zero along $V$ and in this way, we obtain
\begin{eqnarray}\label{4p7}
\mathrm{ord}_{S_i}(\Delta_I|_V) = 0.
\end{eqnarray}
Therefore,  it follows from  Theorem \ref{teo404} that 
$$
\mathrm{GSV}(\omega,V,S_i) = \mathrm{ord}_{S_i}(a_{I\,}|_V) - \mathrm{ord}_{S_i}(\Delta_{I\,}|_V)=\mathrm{ord}_{S_i}(a_{I\,}|_V)>0.
$$
\end{proof}

\section{An Application: Poincar\'e problem for Pfaff systems } \label{sec23}
In this section we show that the non-negativity of   the GSV-index gives an  obstruction to the solution of the Poincar\'e problem for Pfaff systems on projective spaces. This application was motivated  by a result due to  M. Brunella  in  \cite{Bru}.

\begin{teo}\label{teo_ult}
Let $\omega \in \mathrm H^0(\PP^n,\Omega_{\PP^n}^k(d+k+1))$ be a holomorphic  Pfaff system of rank $k$ and degree $ d$. Let $V\subset \PP^n$  be a reduced   complete intersection variety, of codimension $k$ and multidegree $(d_1,\dots,d_{k})$ ,  invariant  by $\omega$. Suppose that  $\mathrm{Sing}(\omega,V)$ has codimension one in $V$,  then
\begin{eqnarray} \nonumber 
\sum_i \mathrm{GSV}(\omega,V,S_i)\deg(S_i) \,\,\,=\,\,\, [d+k+1 - (d_1+\cdots+d_k)]\,\cdot (d_1\cdots d_k),
\end{eqnarray}
where $S_i$ denotes an irreducible component of $\mathrm{Sing}(\omega,V)$.
In particular,  if $\mathrm{GSV}(\omega,V,S_i)\geq 0$, for all $i$, we have 
$$
d_1+ \cdots+ d_k \leq  d+k+1.
$$
\end{teo}
\bigskip
\begin{proof} \noindent By applying    Theorem \ref{prop9}   and taking degrees we obtain
\begin{eqnarray}\nonumber
\sum_i \mathrm{GSV}(\omega,V,S_i)\deg(S_i) &=& \deg\left(c_1([ \NNN \otimes \det ( N_{V/\PP^n})^{-1}]|_V \frown [V]\right).
\end{eqnarray}
\noindent  On the one hand,   the normal bundle of $V$ is given by
$$
N_{V/\PP^n}= \OO_{\PP^n}(d_1)\oplus\cdots \oplus \OO_{\PP^n}(d_k)|_{V}.
$$
Thus 
\begin{eqnarray}\nonumber
\det(N_{V/\PP^n}) = \OO_{\PP^n}(d_1+\dots+d_k)|_V.
\end{eqnarray}
On the other hand, we have that 
$$
\NNN = \OO_{\PP^n}(d+k+1).
$$
Now, since 
$$
[V] = (d_1 \cdots d_k)\,\,c_1(\OO(1))^k
$$
we get
\begin{eqnarray}\nonumber
\deg([\, \NNN \otimes  \det(N_{V/\PP^n})^{-1}\,]|_V  \frown [V])= [(d+k+1) - (d_1+\cdots+d_k)] \,\cdot (d_1\cdots d_k).
\end{eqnarray}
\noindent We obtain,
\begin{eqnarray}\nonumber
\sum_i \mathrm{GSV}(\omega,V,S_i)\deg(S_i) &=& [d+k+1 - (d_1+\cdots+d_k)]\,\cdot (d_1\cdots d_k).
\end{eqnarray}
Now, if $\mathrm{GSV}(\omega,V,S_i)\geq 0$, for all $i$, we have 
\begin{eqnarray}\nonumber
0 \leq \sum_i \mathrm{GSV}(\omega,V,S_i)\deg(S_i) &=& [d+k+1 - (d_1+\cdots+d_k)]\,\cdot (d_1 \cdots   d_k).
\end{eqnarray}
This implies that 
$$
d_1+ \cdots+ d_k \leq   d+k+1.
$$
\end{proof}

\hyphenation{obs-truc-tion} 

In the next result we obtain a bound similar to those by   Esteves--Cruz   \cite{EstCruz} and  Corr\^ea--Jardim \cite{CJ}.  

\begin{cor}
Let $\omega \in \mathrm H^0(\PP^n,\Omega_{\PP^n}^k(d+k+1))$ be a holomorphic  Pfaff system of rank $k$ and degree $ d$. Let $V\subset \PP^n$  be a  reduced complete intersection variety of codimension $k$ and multidegree $(d_1,\dots,d_{k})$   invariant  by $\omega$. Suppose that  $\mathrm{Sing}(\omega,V)$ has codimension one in $V$ and that 
  $S_i\cap  \mathrm{Sing}(V)= \emptyset$ for all $i$, then
\begin{eqnarray}\nonumber
d_1+ \cdots+ d_k \leq  d+k+1.
\end{eqnarray}
Moreover,   if $V$ is regular we have 
\begin{eqnarray}\nonumber
d_1+ \cdots+ d_k \leq  d+k.
\end{eqnarray}
\end{cor}

\begin{proof} 
The result follows from  Corollary \ref{4p10} and Corollary \ref{4p4}.
\end{proof}

Now we  give an optimal example.  
\begin{exe}
Consider the  Pfaff system 
  $\omega \in \mathrm H^0(\PP^n, \Omega_{\PP^n}^k(d+k+1))$
given by 
$$
\omega = \sum_{0 \le j
\le k} (-1)^j d_j f_j \ df_0 \wedge \dots \wedge \widehat
{df_j} \wedge \dots \wedge df_k,
$$
where $f_j  $ is a homogeneous polynomial
of degree $d_j$. We can see    that 
$$
d_0+d_1 +\dots + d_k = d + k + 1.
$$
Suppose that  $\deg(f_0)=d_0=1$ and that $V=\{ f_1=\cdots=f_{k }=0\}$  is smooth. We have that $V$ is  invariant by $\omega$ and  
$$
d_1 +\dots + d_k = d + k .
$$
\end{exe}

\end{document}